\tikzset{labl/.style={anchor=south, rotate=270, inner sep=.5mm}}
\numberwithin{equation}{section}
\theoremstyle{plain}
\newtheorem{theorem}[equation]{Theorem}
\newtheorem{lemma}[equation]{Lemma}
\theoremstyle{definition}
\newtheorem{example}[equation]{Example}
\theoremstyle{remark}
\newtheorem*{ack}{Acknowledgements}
\newcommand{\cat}[1]{\mathcal{#1}}
\newcommand{\dcat}[1]{\operatorname{D}(\operatorname{Mod}#1)}
\newcommand{\ext}{\operatorname{Ext}}
\newcommand{\fHom}{\operatorname{\mathcal{H}\!\!\;\mathit{om}}}
\newcommand{\gam}{\varGamma} 
\newcommand{\rgam}{\mathbf{R}\varGamma}
\newcommand{\hh}{\operatorname{H}}
\newcommand{\Hom}{\operatorname{Hom}}
\newcommand{\rhom}{\operatorname{RHom}}
\newcommand{\rmod}{\operatorname{mod}}
\newcommand{\lotimes}{\otimes^{\mathbf L}}
\newcommand{\one}{\mathds{1}}
\newcommand{\perf}{\operatorname{Perf}}
\newcommand{\proj}{\operatorname{Proj}}
\newcommand{\rad}{\operatorname{rad}}
\newcommand{\res}{\operatorname{res}}
\newcommand{\spec}{\operatorname{Spec}}
\newcommand{\stmod}{\operatorname{stmod}}
\newcommand{\StMod}{\operatorname{StMod}}
\newcommand{\thick}{\operatorname{Thick}}
\newcommand{\uHom}{\underline{\Hom}}
\newcommand{\uEnd}{\underline{\operatorname{End}}}
\newcommand{\fa}{\mathfrak a}
\newcommand{\fm}{\mathfrak m}
\newcommand{\fp}{\mathfrak p}
\newcommand{\fq}{\mathfrak q}
\title[Locally dualizable modules]
{Locally dualizable modules abound}
\author[Jon F. Carlson]{Jon F. Carlson}
\address{Department of Mathematics, University of Georgia, 
Athens, GA 30602, USA}
\email{jfc@math.uga.edu}
\author[S.~B.~Iyengar]{Srikanth B. Iyengar}
\address{Department of Mathematics, University of Utah,
Salt Lake City, UT 84112, USA}
\email{srikanth.b.iyengar@utah.edu}
\date\today
\keywords{derived category, dualisable object, stable module category, tensor triangulated category}
\subjclass[2020]{13D09 (primary); 18G80, 14F08 (secondary)}
\begin{document}

\begin{abstract}
It is proved that given any prime ideal $\mathfrak{p}$ of height at least 2 in a countable commutative noetherian ring $A$, there are uncountably many more 
dualizable objects in the $\mathfrak{p}$-local $\fp$-torsion stratum of the derived category of $A$ than those that are obtained as retracts of images of perfect $A$-complexes. An analogous result is established dealing with the stable module category of the group algebra,  over a countable field of positive characteristic 
$p$, of an elementary abelian $p$-group of rank at least 3.
\end{abstract}

\maketitle

\section{Introduction}
This work is about dualizable objects in tensor triangulated 
categories arising in commutative algebra and the modular 
representation theory of finite groups. An object $D$ in a  
tensor triangulated category $\cat{C}$ is \emph{dualizable} 
if the natural map 
\[
\fHom(D,\one)\otimes X \longrightarrow \fHom(D,X)
\]
is an isomorphism for all $X$ in $\cat{C}$. Here $\fHom$ and 
$\otimes$ are the internal function object and product in 
$\cat C$, respectively, and $\one$ is the unit of the product.

Consider $\dcat A$, the derived category of a commutative noetherian 
ring $A$, with tensor structure given by the derived tensor product 
$-\lotimes-$, the unit is $A$, and function object is $\rhom_A(-,-)$. 
The dualizable objects in $\dcat A$ are precisely the perfect 
$A$-complexes, $\perf A$. These are also the compact objects, and hence 
$\dcat A$ is rigid as a compactly generated tensor triangulated category.

We consider also $\StMod(kG)$ the stable module category of 
a finite group $G$, with $k$ a field of positive characteristic 
dividing $|G|$. In this case the tensor structure is given by 
$-\otimes_k-$ with diagonal $G$ action and the function object 
is $\Hom_k(-,-)$, again with diagonal $G$-action; the unit is $k$ 
with trivial $G$-action. The dualizable objects are those in 
$\stmod(kG)$, namely, the $kG$ modules that are stably isomorphic 
to finite dimensional ones, and hence coincide with the compact 
objects, so $\StMod(kG)$ is also rigid.

Both $\dcat A$ and $\StMod(kG)$ admit natural stratifications 
into ``local" triangulated subcategories that determine, to a 
large extent, their global structure. In $\dcat A$ the strata 
are parameterized by points $\fp\in \spec A$ and the stratum 
corresponding to $\fp$ consists of the $\fp$-local and $\fp$-torsion 
complexes in $\dcat A$, which is denoted $\gam_{\fp}\dcat A$; 
see Section~\ref{se:localalgebra} for details. There is an 
analogous stratification $\StMod(kG)$, with parameter space 
$\proj H^*(G,k)$; see Section~\ref{se:groups}. In both cases, 
the strata are again tensor triangulated subcategories, and it is 
of interest to understand the dualizable objects in these categories. 
A noteworthy feature now is that, unless $\fp$ is minimal, there are 
many more dualizable objects than compact ones, so the strata are not rigid. 

There is a natural functor $\gam_\fp \colon \dcat A\to \gam_\fp \dcat A$ and  the dualizable objects in $\gam_\fp\dcat A$ are generated as a thick subcategory by $\gam_\fp A$; see \cite[Theorem]{Benson/Iyengar/Krause/Pevtsova:2023a}. The question arose whether $\gam_\fp\perf A$ is dense in the subcategory of local dualizable objects; in other words, whether each dualizable object in $\gam_\fp\dcat A$ is a retract of a complex  $\gam_{\fp}P$, with $P$ a perfect $A$-complex. The point of this paper is that when $A$ is countable, there are uncountably many, mutually non-isomorphic, indecomposable dualizable objects in $\gam_\fp\dcat A$, but at most countably many that are retracts of images of perfect complexes in $A$; see Theorem~\ref{th:ca}. 

There is an analogous description of the dualizable objects in $\gam_\fp\StMod(kG)$, established in \cite{Benson/Iyengar/Krause/Pevtsova:2024a}, and once again it turns that there can be many more dualizable objects in this strata than direct summands of those induced from $\stmod(kG)$, the global dualizable objects; see Theorem~\ref{th:groups}. 

\begin{ack}
This work is partly supported by National Science Foundation Grant DMS-200985 (SBI).
\end{ack}

\section{Local algebra}
\label{se:localalgebra}

Let $A$ be a commutative noetherian ring and $\dcat A$ the (full) derived category of $A$-modules, viewed as a tensor triangulated category, where the product of $A$-complexes $X,Y$ is the derived tensor product $X\lotimes_AY$. The derived category is rigidly compactly generated, with compact objects the perfect $A$-complexes, that is to say, those that are isomorphic, in $\dcat A$, to bounded complexes of finitely generated projective modules. We denote this category $\perf A$. It is the thick subcategory of $\dcat A$ generated by $A$.

Given a point $\fp \subseteq \spec R$ consider the exact functor
\[
\gam_{\fp} \colon \dcat A\to \dcat A\quad\text{where $X \mapsto \rgam_{V(\fp)}(X_\fp)$}
\]
where $\mathbf{R}\gam_{V(\fp)}(-)$ is the functor representing local cohomology with support in the ideal $\fp$ of $A$. The image $\gam_\fp\dcat A$ consists of precisely the $\fp$-local and $\fp$-torsion complexes. It is a tensor triangulated category in its own right, with product induced from that on $\dcat A$, unit $\gam_\fp A$, and function object $\gam_{\fp}\rhom_A(-,-)$. It is not rigid, unless $\fp$ is minimal, and there are many more rigid objects than compact ones. See \cite[Section 4]{Benson/Iyengar/Krause/Pevtsova:2023a} for proofs of these assertions.

The main result of this section is as follows.

\begin{theorem}
\label{th:ca}
Let $A$ be a countable commutative noetherian ring and $\fp\in \spec A$ such that $\mathrm{height}\, \fp \ge 2$. There exist uncountably many mutually non-isomorphic indecomposable dualizable objects in $\gam_\fp\dcat A$ none of which is a retract of an object in $\gam_\fp \perf A$.
\end{theorem}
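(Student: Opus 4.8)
The plan is to identify the subcategory of dualizable objects with the perfect complexes over the completion $\widehat{A_{\fp}}$, and then, using that $A$ is countable while $\widehat{A_{\fp}}$ is not, to exhibit uncountably many pairwise non-isomorphic indecomposables among the latter; the hypothesis $\operatorname{height}\fp\ge 2$ enters only in that last step. First I would reduce to the case where $A$ is local with maximal ideal $\fm=\fp$ and $d:=\dim A\ge 2$: the functor $\gam_{\fp}$ factors through $\dcat{A_{\fp}}$ and induces a tensor triangulated equivalence $\gam_{\fp}\dcat A\simeq\gam_{\fp A_{\fp}}\dcat{A_{\fp}}$ carrying $\gam_{\fp}P$ to the image of $P_{\fp}\in\perf{A_{\fp}}$, so it suffices to prove the (formally stronger) statement over the countable local ring $A_{\fp}$, with ``$\gam_{\fp}\perf A$'' now read as $\{\gam_{\fm}Q : Q\in\perf A\}$. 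Write $R:=\widehat A$; by the Cohen structure theorem $R$ is module-finite over a complete regular local subring $R_{0}$ of dimension $d\ge 2$, and such an $R_{0}$ is a power series ring in at least one variable over a field or over a complete discrete valuation ring, so $R$ is uncountable.

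Next, the key structural point. Since $A$ is noetherian, $\rgam_{\fm}$ is a coreflection and Greenlees--May duality yields
\[
\rhom_A(\rgam_{\fm}A,\rgam_{\fm}A)\ \simeq\ \rhom_A(\rgam_{\fm}A,A)\ \simeq\ R ,
\]
concentrated in degree $0$; thus the differential graded endomorphism ring of the unit $\gam_{\fm}A$ is formal and equal to $R$. Because $\gam_{\fm}A$ generates $\thick(\gam_{\fm}A)$, which by the cited result is the subcategory of dualizable objects, a dévissage argument shows that $T:=\rhom_A(\gam_{\fm}A,-)$ restricts to an equivalence of triangulated categories $\thick(\gam_{\fm}A)\xrightarrow{\ \sim\ }\perf R$, with $T(\gam_{\fm}A)=R$ and $T(\gam_{\fm}P)\simeq P\otimes_A R$ for $P$ perfect. (It is \emph{not} a tensor equivalence, consistently with $\gam_{\fm}\dcat A$ not being rigid.) This reduces the theorem to commutative algebra: as $A$ is countable there are only countably many objects of the form $P\otimes_A R$, and as $R$ is complete local the category $\perf R$ is Krull--Schmidt, so the retracts of those objects give only countably many indecomposables up to isomorphism; it therefore suffices to produce uncountably many pairwise non-isomorphic indecomposable objects of $\perf R$.

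For the construction I would pass to a domain. Fix a minimal prime $\fq$ of $R$ with $\dim R/\fq=d$ and set $S:=R/\fq$, a complete local domain of dimension $d\ge 2$; since $\fq\cap R_{0}=0$ the subring $R_{0}$ maps injectively into $S$, and $R_{0}\hookrightarrow S$ is module-finite. Using $d\ge 2$, choose prime elements $f_{g}\in\fm_{R_{0}}$, indexed by $g$ ranging over an uncountable set, that generate pairwise distinct height-one primes of the unique factorization domain $R_{0}$ --- for instance $f_{g}=x_{2}-g(x_{1})$ for $g\in x_{1}k[[x_{1}]]$ in the equicharacteristic case. For each $g$ pick a lift $\widetilde f_{g}\in R$ of the image of $f_{g}$ in $S$, and put
\[
C_{g}:=\operatorname{cone}\!\bigl(\gam_{\fm}A\xrightarrow{\ \widetilde f_{g}\ }\gam_{\fm}A\bigr)\in\thick(\gam_{\fm}A),
\]
a dualizable object with $T(C_{g})=\operatorname{cone}(R\xrightarrow{\widetilde f_{g}}R)$ and hence, $f_{g}$ being a nonzerodivisor on $S$, $T(C_{g})\lotimes_{R}S\simeq S/f_{g}S$. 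In the normalization of $S$ the divisor of $f_{g}$ is supported on the nonempty finite set of height-one primes lying over $(f_{g})R_{0}$, and these supports are pairwise disjoint as $g$ varies; so the ideals $f_{g}S$ are pairwise distinct, the modules $S/f_{g}S$ pairwise non-isomorphic, and therefore the $C_{g}$ pairwise non-isomorphic. Finally, decomposing each $C_{g}$ into its finitely many indecomposable summands (Krull--Schmidt in $\perf R$, transported back along $T$) and noting that a countable supply of indecomposables could only yield countably many finite direct sums, one obtains an uncountable set of pairwise non-isomorphic indecomposable dualizable objects; discarding the countably many of them that happen to be retracts of some $\gam_{\fm}P$ leaves the required family.

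The main obstacle is this last construction: one must force uncountably many of the cones $C_{g}$ to remain non-isomorphic over a completely arbitrary --- possibly non-reduced and non-normal --- complete local ring, which is why the argument has to route through a domain quotient, its normalization, and a divisor-theoretic computation rather than arguing directly in $R$. It is precisely here that $\operatorname{height}\fp\ge 2$ is used: for a one-dimensional completion $\perf{\widehat{A_{\fp}}}$ has only countably many indecomposable objects, and the conclusion of the theorem fails.
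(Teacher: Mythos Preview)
Your argument is correct and follows the same overall architecture as the paper's: reduce to the local case, invoke the Greenlees--May equivalence $\thick_A(\rgam_\fm A)\simeq \perf \widehat A$, use that $\perf \widehat A$ is Krull--Schmidt together with countability of $A$ to bound the retracts of $\gam_\fm\perf A$, and then exhibit uncountably many pairwise non-isomorphic objects on the $\widehat A$-side.

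The execution of that last step, however, differs. The paper takes the uncountable family of height-one primes $\{\fp_u\}$ of $\widehat A$ supplied directly by countable prime avoidance in a complete local ring (their Lemma on prime avoidance), forms Koszul complexes on minimal generating sets of the $\fp_u$, distinguishes them by the radical of the kernel of $\widehat A\to\ext^*(K_u,K_u)$, and cites \cite{Altmann-et-all:2017} for their indecomposability. You instead route the construction through Cohen's regular subring $R_0\subset \widehat A$ and a domain quotient $S$, build cones on single elements, distinguish them via annihilators after base change to $S$ (using a divisor argument in the normalization), and then extract indecomposables by a pigeonhole argument rather than proving indecomposability directly. Your detour through $R_0$, $S$ and $\widetilde S$ is more laborious than simply invoking countable prime avoidance in $\widehat A$ and reading off $\sqrt{\operatorname{ann}}$; on the other hand, your pigeonhole step sidesteps the external citation for indecomposability of Koszul complexes, which is a genuine simplification. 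One small point: your explicit family $f_g=x_2-g(x_1)$ only covers the equicharacteristic case; in mixed characteristic you should either give an analogous family in $V[[x_1,\dots,x_{d-1}]]$ or, more simply, just appeal to countable prime avoidance in $R_0$ (or in $\widehat A$) to get uncountably many height-one primes, as the paper does.
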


In fact the argument, which is given towards the end of this section, 
shows that $\gam_\fp\perf A$ contains only countably many isomorphism 
classes of objects, even allowing for retracts, but that there are 
uncountably many non-isomorphic indecomposable objects $\thick(\gam_\fp A)$. 
It is easy to check that the latter subcategory consists of dualizable 
objects in $\gam_{\fp}\dcat A$. As it happens, these are all the dualizable 
objects. This is the main result in \cite{Benson/Iyengar/Krause/Pevtsova:2023a},
but we do not need this fact here.

We record a simple observation.

\begin{lemma}
\label{le:countable}
When $A$ is a countable noetherian ring, there are only countably many  
isomorphism classes of objects in $\rmod A$, and, more generally, 
only countably many isomorphism classes in $\operatorname{D}^b(\rmod A)$.
\end{lemma}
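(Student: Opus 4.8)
The plan is to prove the two assertions in turn, with noetherianity used throughout to keep every object and morphism set in sight countable.

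First I would treat $\rmod A$. Every finitely generated $A$-module is isomorphic to $A^n/N$ for some $n\ge 0$ and some submodule $N\subseteq A^n$. Since $A$ is noetherian, each such $N$ is finitely generated, so the assignment sending a finite subset of $A^n$ to the submodule it generates is surjective onto the set of all submodules of $A^n$. As $A$ is countable, so is $A^n$, hence so is the collection of its finite subsets, and therefore the set of submodules of $A^n$ is countable. Taking the union over $n\ge 0$ shows there are only countably many isomorphism classes in $\rmod A$. I would also record the consequence that every $M\in\rmod A$, being a quotient of the countable set $A^n$, is itself a countable set; hence for $M,N\in\rmod A$ the abelian group $\Hom_A(M,N)$ is countable, since a homomorphism is determined by the images of a finite generating set of $M$, so that $\Hom_A(M,N)$ embeds into a finite power of $N$.

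Next, $\operatorname{D}^b(\rmod A)$. An object here is, by definition, a bounded complex of finitely generated $A$-modules, and passing from complexes to quasi-isomorphism classes only decreases the number of isomorphism classes; so it suffices to bound the isomorphism classes of bounded complexes of finitely generated modules in the strict category of complexes. Such a complex is supported in a finite interval $[a,b]\subseteq\mathbb{Z}$ --- countably many choices --- and in each of the finitely many degrees $i\in[a,b]$ one chooses an isomorphism class of finitely generated module, again countably many choices each by the first part. Having fixed the terms $M^i$, the remaining data is the tuple of differentials, which lies in the finite product $\prod_{i}\Hom_A(M^i,M^{i+1})$, a countable set by the observation above; passing to isomorphism classes of complexes replaces this countable set only by a quotient of it (by the action of $\prod_i\operatorname{Aut}(M^i)$). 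Assembling, the isomorphism classes of bounded complexes of finitely generated modules form a countable union of countable sets, hence a countable set, and the claim follows. (One could instead argue by induction on the length of the cohomology, using the truncation triangles together with the fact that $\ext^{*}_A$ of finitely generated modules is finitely generated, hence countable; but the direct count above is cleaner.)

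The argument is essentially bookkeeping, and the only place where countability could \emph{a priori} fail is exactly where noetherianity enters: it is what forces the submodules of $A^n$, and consequently the morphism sets between finitely generated modules, to be countable. Over a countable non-noetherian ring the statement is already false for cyclic modules, since distinct ideals are distinct annihilators.
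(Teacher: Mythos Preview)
Your proof is correct and follows the same elementary counting strategy as the paper. For $\rmod A$ the paper phrases the count via $m\times n$ matrices over $A$ rather than finite generating sets of submodules of $A^n$, which amounts to the same thing; for $\operatorname{D}^b(\rmod A)$ the paper instead invokes a normal form---every object is isomorphic to a complex $0\to M_n\to F_{n-1}\to\cdots\to F_0\to 0$ with one arbitrary finitely generated module and the rest finite free---which sidesteps your separate observation that $\Hom_A(M,N)$ is countable, but your direct enumeration of all bounded complexes of finitely generated modules works equally well.

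One caveat unrelated to the proof itself: your closing aside, that the lemma fails for cyclic modules over \emph{every} countable non-noetherian ring, overreaches. It would require any such ring to have uncountably many ideals, and that need not hold---a countable valuation domain with value group $\mathbb{Z}^2$ ordered lexicographically is non-noetherian yet has only countably many ideals. Drop or soften the remark.
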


\begin{proof}
Any finitely generated $A$-module occurs as a cokernel of a map $A^m\to A^n$, for each nonnegative integers $m,n$, and each such map is given by a matrix of size $m\times n$ with coefficients  in $A$. Since $A$ is countable, there are only countably many such matrices, which justifies the claim about $\rmod A$. The one about complexes follows because in $\dcat A$ any complex with finitely generated homology is isomorphic  to one of the form
\[
0\longrightarrow M_n\longrightarrow F_{n-1}\longrightarrow \cdots \longrightarrow F_1 \longrightarrow F_0 \longrightarrow 0\,,
\]
where $M_n$ is in $\rmod R$ and each $F_i$ is a finite free $A$-module.
\end{proof}

The result below is also well-known; see \cite[p.~500]{Hassler/Wiegand:2009}. 

\begin{lemma}
\label{le:uncountable}
Let $A$ be a commutative noetherian local ring that is complete with 
respect to $\fm$, its maximal ideal. If $\dim A\ge 2$, then there 
exists an uncountable collection of distinct prime ideals in $A$, 
each of height one. In the same vein, there is an uncountable 
collection of elements $\{a_u\}_{u\in U}$ with 
$\sqrt{a_u}\ne \sqrt{a_v}$ for $u\ne v$.
\end{lemma}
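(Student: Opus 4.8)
The plan is to reduce to the case of a complete local \emph{domain}, to exhibit there an uncountable family of prime elements in a power-series subring, and to transfer both conclusions back. For the reduction, choose a minimal prime $\fn$ of $A$ with $\dim(A/\fn)=\dim A$ and set $B:=A/\fn$, a complete local domain with $\dim B\ge 2$. Suppose one has produced uncountably many height-one primes $\fp_u$ of $B$ and uncountably many $b_u\in B$ with the radicals $\sqrt{(b_u)}$ pairwise distinct. For the first conclusion, the preimages $\fq_u\subseteq A$ of the $\fp_u$ are pairwise distinct and satisfy $A/\fq_u=B/\fp_u$; since each quotient $A/\fn'$ by a minimal prime $\fn'$ of $A$ is a catenary local domain, the dimension formula gives
\[
\operatorname{ht}_{A/\fn'}(\fq_u/\fn')=\dim(A/\fn')-\dim(A/\fq_u)\le\dim A-(\dim B-1)=1
\]
for every $\fn'\subseteq\fq_u$, with equality for $\fn'=\fn$, whence $\operatorname{ht}_A\fq_u=1$. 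For the second conclusion, lift the $b_u$ to $\widetilde b_u\in A$; if $\sqrt{(\widetilde b_u)A}=\sqrt{(\widetilde b_v)A}$, then adding $\fn$ and taking radicals gives $\sqrt{(\widetilde b_u)A+\fn}=\sqrt{(\widetilde b_v)A+\fn}$, which under $A\twoheadrightarrow B$ reads $\sqrt{(b_u)B}=\sqrt{(b_v)B}$, a contradiction. So it suffices to prove the lemma for $A=B$.

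So let $A$ now be a complete local domain of dimension $d\ge 2$. By Cohen's structure theorem there is a module-finite, hence integral, inclusion $\Lambda\hookrightarrow A$ with $\Lambda$ a complete regular local ring of dimension $d$ — a power-series ring $k[[x_1,\dots,x_d]]$ over a field, or $V[[x_1,\dots,x_{d-1}]]$ over a complete discrete valuation ring $V$. Inside $\Lambda$ I would write down an uncountable family $\{\pi_u\}_{u\in U}$ of pairwise non-associate prime elements: when $\Lambda$ has at least two power-series variables, take $\pi_u=x_2-f_u(x_1)$ with $f_u$ ranging over the uncountable set of power series in $x_1$ having zero constant term; in the only remaining case, $\Lambda=V[[x_1]]$ with $d=2$, take $\pi_u=x_1-v$ with $v$ ranging over the maximal ideal of $V$, which is uncountable because a complete discrete valuation ring is. Each $\pi_u$ lies outside $\fm_\Lambda^2$, so it extends to a regular system of parameters and $\Lambda/(\pi_u)$ is regular of dimension $d-1$; hence $(\pi_u)$ is a prime of height one. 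That the $(\pi_u)$ are pairwise distinct one checks by applying the evaluation homomorphism $\Lambda\to k[[x_1]]$ with $x_1\mapsto x_1$, $x_2\mapsto f_u(x_1)$ and the other variables to $0$ (respectively $\Lambda\to V$ with $x_1\mapsto v$): it kills $\pi_u$ but sends $\pi_v$ to $f_u-f_v$ (respectively $v-v'$), so $\pi_u$ and $\pi_v$ cannot be associates unless $u=v$.

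It remains to transfer these to $A$. The integral extension $\Lambda\hookrightarrow A$ satisfies lying-over and incomparability. For each $u$ choose a prime $\fp_u$ of $A$ with $\fp_u\cap\Lambda=(\pi_u)$; as $A$ is a domain and $\fp_u\ne 0$, we have $\operatorname{ht}_A\fp_u\ge 1$, while any chain of primes in $A$ below $\fp_u$ contracts, by incomparability, to a strictly increasing chain of the same length below $(\pi_u)$, so $\operatorname{ht}_A\fp_u\le\operatorname{ht}_\Lambda(\pi_u)=1$. The $\fp_u$ are pairwise distinct since their contractions are, giving the first assertion. For the second, set $b_u:=\pi_u\in\Lambda\subseteq A$: lying-over yields $\sqrt{IA}\cap\Lambda=\sqrt I$ for every ideal $I$ of $\Lambda$, so $\sqrt{(b_u)A}\cap\Lambda=(\pi_u)$ and the radicals $\sqrt{(b_u)A}$ are pairwise distinct. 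I expect the delicate point to be the reduction step, where $A$ may fail to be equidimensional: that is precisely why $\fn$ is chosen of maximal dimension and why one invokes that complete local domains are catenary, so that preimages of height-one primes of $B$ still have height one in $A$ — the transfer along $\Lambda\hookrightarrow A$, by contrast, uses only the standard lying-over and incomparability properties of integral extensions.
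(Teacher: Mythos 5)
Your proof is correct, but it takes a genuinely different route from the paper's. The paper argues by contradiction using \emph{countable prime avoidance}: if there were only countably many height-one primes $\{\fp_i\}$, then by Krull's principal ideal theorem every nonunit lies in some $\fp_i$, so $\fm \subseteq \bigcup_i \fp_i$; since complete local rings enjoy countable prime avoidance (Burch, Sharp--V\'amos), $\fm \subseteq \fp_i$ for some $i$, contradicting $\dim A\ge 2$. The second assertion then follows because each $\sqrt{(a)}$ is a finite intersection of height-one primes and every height-one prime is minimal over some principal ideal. Your argument instead goes via Cohen's structure theorem: reduce to a complete local domain $B$ by passing to $A/\fn$ for a minimal prime of maximal coheight, embed a power-series ring $\Lambda$ module-finitely into $B$, write down an explicit uncountable family of pairwise non-associate prime elements in $\Lambda$, and transfer to $B$ (and thence $A$) using lying over, incomparability, the contraction identity $\sqrt{I A}\cap\Lambda = \sqrt I$, and catenarity for the height bookkeeping in the reduction step. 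Both routes rest on a deep structural fact about complete local rings --- countable prime avoidance in the paper, Cohen's theorem in yours. The paper's proof is shorter and isolates a clean cardinality obstruction; yours is constructive and produces the uncountable family explicitly, at the price of a careful reduction (correctly handled) and a case analysis over equal versus mixed characteristic. One point worth stating explicitly in your write-up is that the dimension formula $\operatorname{ht}\fp + \dim(R/\fp) = \dim R$ you invoke holds because complete local domains are catenary \emph{and} (trivially, being domains) equidimensional; catenarity alone in a non-local setting would not suffice.
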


\begin{proof}
Assume to the contrary that there are  only countable many prime 
ideals $\{\fp_i\}_{i\geqslant 1}$ of height one. Since each 
non-invertible element in $A$ is contained in a height one prime, 
by Krull's Principal Ideal theorem, it follows that 
$\fm\subseteq \cup_{i}\fp_i$.  Since $A$ is complete, it has the 
countable prime avoidance property; see, for instance, 
\cite{Burch:1972, Sharp/Vamos:1985}. We deduce that $\fm\subseteq \fp_i$ 
for some $i$, contradicting the hypothesis that $\dim A\ge 2$.

Because every prime ideal of height one is minimal over a principal 
ideal, and the radical of principal ideal is a finite intersection of 
prime ideals, the second part of the assertion follows from the first.
\end{proof}

\begin{proof}[Proof of Theorem~\ref{th:ca}]
One has that $\gam_{\fp}\dcat A\simeq \gam_\fp\dcat{A_\fp}$, so replacing $A$ by its localization at $\fp$ we can suppose it is local, say with maximal ideal $\fm$, with $\dim A\ge 2$. Let $\widehat A$ be the $\fm$-adic completion of $A$. A key input in the arguments presented below is that the natural map of rings $A\to \ext_A^*(\rgam_{\fm}A,\rgam_{\fm}A)$ factors through the completion map $A\to \widehat A$ and yields an isomorphism 
\[
\widehat A\xrightarrow{\ \cong\ } \ext_A^*(\rgam_{\fm}A,\rgam_{\fm}A).
\]
Hence, derived Morita theory yields the Greenlees-May adjoint equivalence
\[
\begin{tikzcd}[column sep=large]
\thick_{\widehat A}(\widehat A) \arrow[leftarrow,yshift=-1ex,swap,rr, "{\rhom_A(\rgam_\fm A,-)}"]
 	&& \arrow[leftarrow,yshift= 1ex,ll,swap, "{\rgam_{\fm}A\lotimes_{\widehat A}-}"] \thick_A(\rgam_\fm A) \,.
	\end{tikzcd}
\]
It is also helpful that $\rhom_A(\rgam_\fm,-)\cong \mathbf{L}\varLambda^\fm(-)$, the left derived functor of $\fm$-adic completion.  See the discussion around \cite[(4.2), (4.3)]{Benson/Iyengar/Krause/Pevtsova:2023a}. 

Let $\{\fp_u\}_u$ be the uncountable collection of prime ideals in $\widehat R$ supplied by Lemma~\ref{le:uncountable}. For each $\fp_u$ let $K_u$ denote the Koszul complex on $\rgam_\fm R$ on a minimal generating set for the ideal $\fp_u$ of $\widehat R$. The complex $K_u$ has the following properties:
\begin{enumerate}[\quad\rm(1)]
\item
Each $K_u$ is  dualizable in $\gam_{\fm}\dcat A$;
\item
One has $K_u\not\cong K_v$ for $u\ne v$;
\item
Each $K_u$ is indecomposable.
\end{enumerate}
Indeed (1) holds because $\rgam_\fm A$, being the unit of the product on $\gam_\fm\dcat A$ is dualizable, and $K_u$ is finitely built from $\rgam_\fm A$.

Under the adjoint equivalence above, $\rgam_\fm A$ is mapped to $\mathbf{L}\varLambda^\fm A$, so  $K_u$ is mapped to the Koszul complex on $\widehat A$ on the chosen minimal generating set for $\fp_u$. It follows that the kernel $I_u$ of the natural map 
\[
\widehat A \longrightarrow \ext^*_{A}(K_u,K_u) \cong \ext^*_{\widehat A}(\mathbf{L}\varLambda^\fm K_u,\mathbf{L}\varLambda^\fm K_u)
\]
satisfies $\sqrt{I_u}=\fp_u$. Since the $\{\fp_u\}_u$ are distinct, (2) 
follows.  Moreover, the Koszul complex over $\widehat A$ on any minimal 
generating set for the ideal $\fp$ is indecomposable in $\dcat{\widehat A}$, 
by \cite[Proposition~4.7]{Altmann-et-all:2017}. Consequently, $K_u$ 
is indecomposable in $\dcat{A}$. This justifies (3).
  
Since the collection $\{K_u\}_{u}$ is uncountable, to complete the proof we have to verify that there are only finitely many isomorphism classes of indecomposable direct summands of $\gam_\fm P$, with $P$ a perfect $A$-complex. To see this, note that since $\widehat A$ is complete, $\thick_{\widehat A}(\widehat A)$ is a Krull-Schmidt category, and hence so is $\thick_A(\rgam_\fm A)$, by the equivalence above. It remains to recall that there are only countably many isomorphism classes of perfect $A$-complexes, by Lemma~\ref{le:countable}.
\end{proof}

\section{Finite groups}
\label{se:groups}
Let $G$ be a finite group and $k$ a field of positive characteristic $p$, 
where $p$ divides the order of $G$. The stable category $\StMod(kG)$ of 
$kG$-modules modulo projective modules is a tensor triangulated category, 
there the product is $-\otimes_k-$ with the diagonal $G$-action, and  
the unit is the trivial $kG$-module $k$. The compact objects are the 
modules equivalent to finitely generated modules. 
They form a thick subcategory denoted $\stmod(kG)$. 

The cohomology ring $\hh^*(G,k)$ is a finitely generated graded $k$-algebra and $\ext^*_{kG}(M,N)$ is a finitely generated module over $\hh^*(G,k)$, for all $M,N$ in $\rmod kG$. The projectivized spectrum $V_G(k) = \proj \hh^*(G,k)$ is the collection of homogeneous prime ideals in $H^*(G,k)$, except the the maximal one, $H^{\geqslant1}(G,k)$.

The support variety $V_G(M)$ of a finitely generated module $M$ is  defined as the collection of those ideals that contain the  annihilator of $\ext^*_{kG}(M,M)$ in $\hh^*(G,k)$. Support varieties for infinite dimensional $kG$-modules are introduced in   \cite{Benson/Carlson/Rickard:1996}; see also \cite{Benson/Iyengar/Krause:2008}. These are subsets of $V_G(k)$ that are not  necessarily closed.

As in the previous section, for each $\fp$ in $V_G(k)$, there exists an exact functor
\[
\gam_{\fp} \colon \StMod(kG) \longrightarrow \StMod(kG) 
\]
whose image is the subcategory consisting of all $kG$-modules whose support 
is contained in $V$.  Then, $\gam_\fp\StMod(kG)$ is again tensor triangulated, 
with tensor product inherited from $\StMod(kG)$. The unit is $\gam_\fp k$,
and the function object is $\gam_\fp\Hom_k(-,-)$. 
There are numerous equivalent ways to characterize dualizable modules 
in this category; see \cite{Benson/Iyengar/Krause/Pevtsova:2024a}.  
Most importantly, the full subcategory of dualizable modules in 
$\gam_\fp\StMod(kG)$ form a thick triangulated  subcategory, and 
$\gam_\fp M$ is dualizable for each $M$ in $\stmod(kG)$.  The theorem 
below and its proof are similar to Theorem~\ref{th:ca}.

\begin{theorem}
\label{th:groups}
Let $k$ be a countable field of characteristic $p$, and $G$ an elementary abelian $p$-group  of rank $\geq 3$. Let $\fp$ be a closed point in $V_G(k)$. There exists an uncountable collection of mutually non-isomorphic, indecomposable dualizable modules in $\gam_\fp\StMod(kG)$, none of which is  a direct summand of $\gam_\fp M$ for  $M$ in $\stmod(kG)$.
\end{theorem}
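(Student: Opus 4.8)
The plan is to mirror the proof of Theorem~\ref{th:ca}, replacing the completion of a local ring by the completed group algebra at the closed point $\fp$. First I would recall the analogue of the derived Morita input: there is an adjoint equivalence between $\thick(\gam_\fp k)$ in $\gam_\fp\StMod(kG)$ and the thick subcategory generated by the unit in the derived (or stable) category of a suitable complete local ring $\widehat{\cat O}_{\fp}$ — concretely, $\ext^*_{kG}(\gam_\fp k,\gam_\fp k)$ is a graded ring whose degree-zero part, or an appropriate completion, is a complete noetherian local ring of dimension $\operatorname{rank}(G)-1$; since $\fp$ is a closed point of $V_G(k)=\proj\hh^*(G,k)$ and $G$ is elementary abelian of rank $\geq 3$, this dimension is $\geq 2$. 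I would cite \cite{Benson/Iyengar/Krause/Pevtsova:2024a} for the precise statement that $\gam_\fp\StMod(kG)$ restricted to its dualizable objects is equivalent, via $\rhom$ against $\gam_\fp k$, to $\thick$ of the unit over this complete local ring, which will play the role that $\widehat A$ and the Greenlees--May equivalence played in the previous section.

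Next I would produce the uncountable family. Apply Lemma~\ref{le:uncountable} to the complete local ring $\widehat{\cat O}_\fp$ of dimension $\geq 2$ to obtain an uncountable collection $\{\fp_u\}_{u\in U}$ of height-one primes, no two with the same radical. Transport each $\fp_u$ through the equivalence: on the group-algebra side this amounts to choosing, for each $u$, a finite sequence of elements of $\ext^*_{kG}(\gam_\fp k,\gam_\fp k)$ lifting a minimal generating set of $\fp_u$, and forming the corresponding Koszul-type object $L_u$ built by iterated cones from $\gam_\fp k$ (equivalently, a tensor product of Carlson-type $L_\zeta$ modules). Then verify the three properties exactly as before: (1) each $L_u$ is dualizable because $\gam_\fp k$ is the unit and hence dualizable, and $L_u$ is finitely built from it; (2) $L_u\not\cong L_v$ for $u\neq v$, because under the equivalence the kernel of $\widehat{\cat O}_\fp\to\ext^*(L_u,L_u)$ has radical $\fp_u$, and the $\fp_u$ are distinct; (3) each $L_u$ is indecomposable, since its image under the equivalence is an honest Koszul complex on a minimal generating set of a prime over the complete local ring $\widehat{\cat O}_\fp$, which is indecomposable by \cite[Proposition~4.7]{Altmann-et-all:2017}, and indecomposability is preserved by equivalences.

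Finally, I would bound the retracts of globally dualizable objects. Since $\widehat{\cat O}_\fp$ is complete local, $\thick$ of its unit is a Krull--Schmidt category, hence so is $\thick(\gam_\fp k)$ via the equivalence; therefore every $\gam_\fp M$ with $M\in\stmod(kG)$ decomposes into finitely many indecomposables, and the set of isomorphism classes of such indecomposable summands, as $M$ ranges over $\stmod(kG)$, is at most countable — because $k$ is countable, there are only countably many isomorphism classes in $\stmod(kG)$ (same matrix-counting argument as Lemma~\ref{le:countable}, applied to $kG$, which is a finite-dimensional $k$-algebra). An uncountable family cannot embed into a countable one as retracts, so uncountably many of the $L_u$ are not retracts of any $\gam_\fp M$; combined with (2) and (3) this gives the theorem.

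The main obstacle I anticipate is pinning down the precise form of the Morita-type equivalence at a closed point of $V_G(k)$ and identifying the relevant complete local ring, together with its dimension being exactly $\operatorname{rank}(G)-1\geq 2$: for elementary abelian $p$-groups the cohomology ring is (up to nilpotents) a polynomial ring in $\operatorname{rank}(G)$ variables, so $\proj$ has dimension $\operatorname{rank}(G)-1$ and the local ring at a closed point has that same dimension, but one must quote \cite{Benson/Iyengar/Krause/Pevtsova:2024a} carefully to get that $\ext^*_{kG}(\gam_\fp k,\gam_\fp k)$ — or its completion — really is this complete regular (or at least complete noetherian local) ring and that the equivalence sends $\gam_\fp k$ to its unit. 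Once that dictionary is in place, the rest of the argument is a faithful translation of the proof of Theorem~\ref{th:ca}.
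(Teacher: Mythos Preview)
Your strategy is sound and would work, but the paper argues differently and more concretely. Rather than invoking a full Morita-type equivalence from \cite{Benson/Iyengar/Krause/Pevtsova:2024a}, the paper first chooses coordinates so that $kG\cong kH\otimes k[z]/(z^p-1)$ with the $\pi$-point for $\fp$ given by $k[z]/(z^p-1)$, and then computes the endomorphism ring directly via \cite[Proposition~5.2]{Carlson:2023a}: $R\colonequals\uEnd_{kG}(\gam_\fp k)\cong\prod_{i\ge 0}\hh^i(H,k)$, which modulo nilpotents is the completion of a polynomial ring in $r-1\ge 2$ variables. Instead of Koszul complexes on generating sets of height-one primes, the paper uses the second part of Lemma~\ref{le:uncountable} to pick uncountably many \emph{single} elements $\zeta\in R$ with pairwise distinct radicals and takes the cones $K_\zeta$; these are distinguished not via an equivalence but by \cite[Theorem~7.6]{Carlson:2023a}, which identifies $\sqrt{\fa_\fp(K_\zeta)}$ with $\sqrt{(\zeta)}$. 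Finiteness of indecomposable summands of each $\gam_\fp M$ is obtained from the finite generation of $\uEnd_{kG}(\gam_\fp M)$ over $R$ (hence finitely many idempotents), rather than from Krull--Schmidt through an equivalence. Your route is cleaner conceptually and parallels Theorem~\ref{th:ca} exactly, but it leans on an equivalence whose precise form you would have to extract from a paper still in preparation; the paper's route trades that dependence for the explicit computations in \cite{Carlson:2023a}, and as a bonus needs only one-element cones, so it never appeals to \cite{Altmann-et-all:2017} for indecomposability.
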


\begin{proof}
Since the dualizable objects form a thick subcategory, it suffices to prove that there is an uncountable collection of mutually non-isomorphic, indecomposable objects in the thick category generated by $\gam_{\fp}\stmod(kG)$ that are not retracts of the images of the finite dimensional ones. This has nothing to do with tensor triangulated structure on $\gam_\fp \StMod(kG)$, and we are free to  choose any coalgebra structure on $kG$ that is convenient. 

We may assume $G = H \times \langle z \rangle$ where $H$ is  elementary abelian of rank $r-1$ and $z^p = 1$. That is, we can assume that the ideal $\fp$ is the radical of the restriction to a subalgebra $k[z]/(z^p) \in kG$, the inclusion  into $kG$  being a $\pi$-point associated to $\fp$ in the language of \cite{Friedslander/Pevtsova:2007}.  The choice of the complementary subalgebra $kH$ is somewhat arbitrary.  That is, we can choose $kH$ to be the subalgebra generated by any  collection $x_1, \dots, x_{r-1}$ in $\rad(kG)$ such that the images of  $z, x_1, \dots, x_{r-1}$ in $\rad(kG)/\rad^2(kG)$ form a $k$-basis.  Then $kG \cong kC \otimes kH$ where $C$ is generated by the unit $1+z$ and  $H$ is generated by $1+x_1, \dots, 1+x_{r-1}$. This is an isomorphism of $k$-algebras, but not generally as Hopf algebras. 

Keeping in mind that $\gam_\fp = \gam_{V(\fp)}$, from \cite[Proposition 5.2]{Carlson:2023a} we see that
\[
R\colonequals \uEnd_{kG}(\gam_\fp k) =\uHom_{kG}(\gam_\fp k,\gam_\fp k) 
	\cong \uHom_{kG}(\gam_\fp k,k) \cong \prod_{i=0}^\infty \hh^i(H,k)\,,
\]
as an additive group. The product is the obvious one, except that, if $p$ is odd, then any two elements of odd degree multiply to zero. 
Hence the endomorphism ring is a commutative local ring that, modulo a nilpotent ideal, is the completion of a polynomial ring of degree $r-1$. In particular its Krull dimension is $r-1\ge 2$.

For each $\zeta \in R$, we define the module  $K_\zeta$ to be the third object in the triangle 
\[
\begin{tikzcd}
K_\zeta \arrow[r] & \gam_\fp k \arrow[r,"\zeta"] & \gam_\fp k \arrow[r] & \,.
\end{tikzcd}
\]
Evidently, $K_\zeta$ is in the thick subcategory generated by 
$\gam_\fp\stmod(kG)$. Let $\fa_\fp(K_{\zeta})$ be the annihilator of
the $R$-module $\uHom_{kG}(\gam_\fp k,K_\zeta))$. Then we recall  
\cite[Theorem 7.6]{Carlson:2023a}  that the radical of 
$\fa_\fp (K_{\zeta})$ coincides with the radical of the ideal 
generated by $\zeta$. Thus, we have that  if $\zeta$ and $\gamma$ 
are elements in $R$ that generate ideals with different radicals, 
then $K_\zeta$ is not  isomorphic to $K_\gamma$. 

When $M$ is a finite dimensional module, the $R$-module $\uEnd_{kG}(\gam_\fp M)$ is finitely generated. Consequently, $\gam_\fp M$ has only a finite number of indecomposable summands, as otherwise,  $\uEnd_{kG}(\gam_\fp M)$ would have an infinite number of idempotents. It follows from the Lemma~\ref{le:countable} that the collection  of modules $K_\zeta$ that can be direct summands of modules of the  form $\gam_VM$ for $M$ finitely generated, is countable.  Since $\dim R\ge 2$ it remains to recall from Lemma~\ref{le:uncountable} that it has an  uncountable number of elements $\zeta$ having mutually distinct radicals. 
\end{proof}

\begin{example}
Suppose that $p=2$ and that $G$ is elementary abelian of order $8$.  
We use the notation of the previous proof.  We write $kG = kH \otimes kC$ 
where $kH = k[x,y]/(x^2,y^2)$ and $kC = k[z]/(z^2)$. Here the variety 
$V$ is the point corresponding to the inclusion $kC \hookrightarrow kG$. 
Choose $\zeta \in \uHom_{kG}(\gam_Vk, \gam_Vk)$ to have the  form 
$\zeta = (0, \zeta_1, \zeta_2, \dots ) $ where $\zeta_i \in \hh^i(H,k)$.
Assume that $\zeta_1 \neq 0$. Because $\zeta_1 \neq 0$, we have that 
the  restriction to $kH$ of $K_{\zeta}$ is a direct sum 
$\sum_{i=0}^\infty U_i$ where $U_i \cong kH$. Choose $u_i \in U_i$ 
to be a $kH$-generator,  for all $i$. With some calculation, it can be 
shown that the action of $z$ is give by a formula
\[
zu_i = \sum_{j = 0}^i (\alpha_jx +\beta_jy)u_{i-j}
\]
where for each $j$, the elements $\alpha_j, \beta_j \in k$, depend on 
the  choices of $\zeta_\ell$ for $\ell \leq j$. With some slight 
adjustment in the proof, Theorem~\ref{th:groups} tells us that 
if $k$ is countable,  then there is an uncountable collection of 
such elements $\zeta$ such that the resulting modules $M_\zeta$ 
are mutually non-isomorphic and not isomorphic to a direct summand of 
any $\gam_VM$ for any $M \in \stmod(kG)$.
\end{example}

\subsection*{General finite groups}

We end with the following result, extending Theorem \ref{th:groups} to any finite group.

\begin{theorem}
\label{th:groups-gen}
Let $k$ be a countable field and $G$ a finite group. Suppose that $\fp$ be a closed point in $V_G(k)$
that is contained in $\res^*_{G,E}(V_G(k))$ for some elementary abelian 
$p$-subgroup $E$ having rank $\geq 3$. There exists an uncountable 
collection of mutually non-isomorphic, indecomposable dualizable modules 
in $\gam_\fp\StMod(kG)$, none of which is  a direct summand of 
$\gam_\fp M$ for  $M$ in $\stmod(kG)$.
\end{theorem}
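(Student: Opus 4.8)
The plan is to reduce the statement for a general finite group $G$ to the already established case of an elementary abelian $p$-group, namely Theorem~\ref{th:groups}, by transporting the construction along restriction and induction between $kG$ and $kE$. Write $\res \colon \StMod(kG)\to\StMod(kE)$ for restriction and let $\uparrow^G_E$ denote induction; both are exact and compatible with the support-variety stratifications. The hypothesis $\fp\in\res^*_{G,E}(V_G(k))$ means precisely that $\fp$ lies in the image of the finite map $\res^*_{G,E}\colon V_E(k)\to V_G(k)$, so there is a closed point $\fq\in V_E(k)$ with $\res^*_{G,E}(\fq)=\fp$. (If there are several such $\fq$, pick one; since $E$ has rank $\ge 3$, Theorem~\ref{th:groups} applies to $\fq$.)

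The key transfer step is the following: restriction and induction restrict to a pair of adjoint exact functors between the strata $\gam_\fp\StMod(kG)$ and $\gam_\fq\StMod(kE)$, and the composite $\res\circ\uparrow^G_E$, applied to objects supported at $\fq$, is a finite direct sum of ``Frobenius twists'' of the identity, one of which is the identity functor itself. Concretely, for the uncountable family $\{K_\zeta\}_{\zeta}$ of indecomposable dualizable modules in $\gam_\fq\StMod(kE)$ produced by Theorem~\ref{th:groups}, consider the induced modules $K_\zeta\!\uparrow^G_E$ in $\gam_\fp\StMod(kG)$. Induction of a dualizable object is dualizable (induction sends $\stmod(kE)$ into $\stmod(kG)$ and is exact, hence preserves thick subcategories; and dualizability in the strata is governed by finite generation of the relevant $\uEnd$-modules, which is preserved), so each $K_\zeta\!\uparrow^G_E$ lies in the thick subcategory generated by $\gam_\fp\stmod(kG)$. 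Passing to an indecomposable direct summand $L_\zeta$ of $K_\zeta\!\uparrow^G_E$ whose restriction to $E$ contains $K_\zeta$ as a summand — such a summand exists because $\res(K_\zeta\!\uparrow^G_E)$ contains $K_\zeta$ by the Mackey-type decomposition above and $\StMod(kG)$ over a complete (here trivially complete, since $k$ is a field) base is Krull--Schmidt on finitely-built objects — we obtain an uncountable family $\{L_\zeta\}_\zeta$ in $\gam_\fp\StMod(kG)$ of indecomposable dualizable modules.

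It remains to check the two requisite properties. First, $L_\zeta\not\cong L_\gamma$ when $\zeta,\gamma$ generate ideals of $R_E\colonequals\uEnd_{kE}(\gam_\fq k)$ with distinct radicals: if $L_\zeta\cong L_\gamma$ then restricting to $E$ and using Krull--Schmidt forces $K_\zeta\cong K_\gamma$ (the other summands of the restrictions being Frobenius twists, which alter the annihilator ideal in a controlled, radical-preserving way), contradicting the distinctness established in the proof of Theorem~\ref{th:groups}. Here one uses that Lemma~\ref{le:uncountable} supplies uncountably many $\zeta$ with pairwise distinct radicals, and that $\dim R_E=\operatorname{rank}E-1\ge 2$. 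Second, no $L_\zeta$ is a direct summand of $\gam_\fp M$ for $M\in\stmod(kG)$: if it were, then $\res(L_\zeta)$ would be a summand of $\gam_\fq(\res M)$ with $\res M\in\stmod(kE)$, hence $K_\zeta$ would be a summand of $\gam_\fq N$ for a finite-dimensional $kE$-module $N$; but by the argument in Theorem~\ref{th:groups} (finite generation of $\uEnd_{kE}(\gam_\fq N)$ forces finitely many indecomposable summands, and there are only countably many finite-dimensional $N$ up to isomorphism by Lemma~\ref{le:countable}), only countably many of the $K_\zeta$ can arise this way, so for all but countably many $\zeta$ no such $M$ exists.

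The main obstacle I anticipate is making precise the behaviour of $\res\circ\uparrow^G_E$ on the $\fq$-stratum and controlling how the ``extra'' summands (the Frobenius twists, or more generally the other double-coset summands in the Mackey formula) interact with the annihilator invariant $\fa_\fq(-)$ of \cite[Theorem~7.6]{Carlson:2023a}; one needs to know that these extra summands either have strictly smaller support or carry an annihilator with the same radical, so that the bijection $\zeta\mapsto\sqrt{(\zeta)}$ is not destroyed by passing to $G$. A clean way to sidestep part of this is to note that $\fp$ being a closed point means $\gam_\fp\StMod(kG)$ depends, up to equivalence of tensor triangulated categories, only on a formal neighbourhood determined by the $\pi$-point, and the $\pi$-point for $\fp$ factors through $E$ by hypothesis; but spelling out this equivalence rigorously, rather than arguing directly with induction and restriction as above, may require more machinery than is convenient here. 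I would therefore carry out the induction/restriction argument in detail, citing \cite{Friedslander/Pevtsova:2007} for the $\pi$-point compatibility and \cite{Carlson:2023a} for the annihilator computations, and treat the Frobenius-twist bookkeeping as the one genuinely technical lemma.
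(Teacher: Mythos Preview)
Your overall strategy matches the paper's: induce the $K_\zeta$ from $E$ to $G$, take indecomposable summands, and compare cardinalities. Two places where the paper is tighter.

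First, your justification that $K_\zeta\!\uparrow^G_E$ is dualizable in $\gam_\fp\StMod(kG)$ has a gap: induction does send $\stmod(kE)$ to $\stmod(kG)$, but $K_\zeta$ is not in $\stmod(kE)$---it lies in $\thick(\gam_\fq k)$ with $\gam_\fq k$ infinite-dimensional---so preservation of thick subcategories of compacts is not the relevant fact. The paper instead uses Frobenius reciprocity: one has $(\gam_\fp k)_{\downarrow E}\cong\bigoplus_{\fq'}\gam_{\fq'}k$ over the finitely many closed points $\fq'$ above $\fp$, so $(\gam_\fq k)^{\uparrow G}$ is a summand of $((\gam_\fp k)_{\downarrow E})^{\uparrow G}\cong\gam_\fp k\otimes (k_{\downarrow E})^{\uparrow G}$, which is $\gam_\fp$ of a finite-dimensional module and hence dualizable. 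Exactness of induction then handles $K_\zeta^{\uparrow G}$. (This same decomposition also shows that restriction does not carry $\gam_\fp\StMod(kG)$ into the single stratum at $\fq$, contrary to what you assert; it lands in the sum over all $\fq'$.)

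Second, the ``Frobenius-twist bookkeeping'' you flag as the main obstacle is unnecessary. Rather than track individual $L_\zeta$ and compare annihilators over $G$, the paper simply counts: the modules $K_\zeta^{\uparrow G}$ represent uncountably many isomorphism classes (by Mackey, $(K_\zeta^{\uparrow G})_{\downarrow E}$ has $K_\zeta$ as a summand, so each isomorphism class of induced module can arise from only finitely many $\zeta$), each has a bounded number of indecomposable summands, and the idempotent completion of $\gam_\fp\stmod(kG)$ has only countably many indecomposable objects by Lemma~\ref{le:countable}. Hence uncountably many indecomposable dualizable summands lie outside that completion, with no need to match them bijectively with the $\zeta$'s or to control how conjugation interacts with the radicals of ideals in $R_E$.
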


\begin{proof}
We use the induction functor $\StMod(kE) \to \StMod(kG)$ that takes a 
$kE$-module $M$ to $M^{\uparrow G} = kG \otimes_{kE} M$.
The restriction to a $kE$-module  of the idempotent 
module $\gam_\fp k$ is still an idempotent
module, and a support variety argument establishes that, in the 
stable category, it has the form 
\[
(\gam_\fp k)_{\downarrow E} \cong \sum \gam_\fq k
\]
where the sum is over the finite finite collection of closed points 
$\fq \in V_E(k)$ such that $\res_{G,E}^*(\fq) = \fp$. 
Then by Frobenius reciprocity, we have that 
\[
\gam_\fp k \otimes (k_{\downarrow E})^{\uparrow G} \cong 
((\gam_\fp k)_{\downarrow E})^{\uparrow G} \cong 
\sum (\gam_\fq k)^{\uparrow G}.
\]
As a consequence, the modules $(\gam_\fq k)^{\uparrow G}$ are dualizable. 

Let $\fq$ denote any one of the points with $\res_{G,E}^*(\fq) = \fp$.
Because the induction functor is exact, for $\zeta$ in 
$\Hom_{kE}(\gam_\fp k, \gam_\fp k)$, the module $K_\zeta^{\uparrow G}$ is also
dualizable. Moreover, by the Mackey Theorem, any such module has 
at most a finite number of indecomposable direct summands. Now the 
theorem follows from the fact that, by Theorem~\ref{th:groups},  there is an uncountable number
of such modules and they are in $\gam_\fp \StMod(kG)$. On the other hand,
the thick subcategory obtained by taking the idempotent completion
of $\gam_\fp \stmod(kG)$ has only a countable number of 
indecomposable objects. 
\end{proof}

\end{document}